\def \RR {\mathbb R}
\def \ZZ {\mathbb Z}
\def \eps {\varepsilon}
\newtheorem{theorem}{Theorem}[section]
\newtheorem{lemma}[theorem]{Lemma}
\newtheorem{proposition}[theorem]{Proposition}
\newtheorem{corollary}[theorem]{Corollary}
\def\myffrac#1#2 in #3{\raise 2.6pt\hbox{$#3 #1$}\mkern-1.5mu\raise 0.8pt\hbox{$
		#3/$}\mkern-1.1mu\lower 1.5pt\hbox{$#3 #2$}}
\def\qed{\hfill $\vcenter{\hrule height .3mm
		\hbox {\vrule width .3mm height 2.1mm \kern 2mm \vrule width .3mm
			height 2.1mm} \hrule height .3mm}$ \bigskip}
\def \Im {{\rm  Im  }}
\begin{document}

\title{Euclidean nets under isometric embeddings}
\author{Matan Eilat}
\date{}
\maketitle

\begin{abstract} 
Suppose that there exists a discrete subset $X$ of a complete, connected, $n$-dimensional Riemannian manifold $M$ such that the Riemannian distances between points of $X$ correspond to the Euclidean distances of a net in $\RR^{n}$.
What can then be derived about the geometry of $M$? 
In \cite{EK} it was shown that if $n=2$ then $M$ is isometric to $\RR^{2}$.
In this paper we show two consequential geometric properties that the manifold $M$ shares with the Euclidean space in any dimension. 
The first property is that $X$ is a net with respect to the Riemannian distance in $M$.
The second property is that all geodesics in $M$ are distance minimizing, and there are no conjugate points in $M$. 
This demonstrates the possibility of inferring infinitesimal qualities from discrete data, even in higher dimensions.
As a corollary we obtain that the large-scale geometry of $M$ is asymptotically Euclidean. 
\end{abstract}

\section{Introduction}

Let $M$ be a complete, connected, $n$-dimensional Riemannian manifold.
We say that a metric space $(X,d_{X})$ embeds isometrically in $M$ if there exists a map $\iota : X \to M$ that preserves distances, i.e. such that
$$
d_{M}(\iota(x),\iota(y)) = d_{X}(x,y)
\qquad \text{for any } x,y \in X,
$$
where $d_{M}$ is the Riemannian distance function in $M$.
When given such an embedding, one can think about the metric space $X$ as a subset of the manifold $M$ for which the distances are ``known''.
The question is then what can we learn about the manifold itself from the knowledge of these distances. 
It appears that for some discrete metric spaces, this partial data can determine the topology, and sometimes even the whole geometry of the underlying manifold.

\medskip
A discrete set $X \subseteq \RR^{n}$ is called a $\delta$-net for $\delta > 0$, if $d(y,X) = \inf_{x \in X} |x - y| < \delta$ for any $y \in \RR^{n}$. 
According to \cite[Theorem 1.1]{EK} an isometric embedding of a net in the Euclidean plane into a two-dimensional surface determines its geometry completely:

\begin{theorem}
Let $M$ be a complete, connected, $2$-dimensional Riemannian manifold. Suppose that there exists a net in $\RR^2$ that embeds isometrically in $M$.
Then the manifold $M$ is flat and it is isometric to the Euclidean plane.
\label{thm_12170}
\end{theorem}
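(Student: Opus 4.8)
The plan is to transfer coarse density from the Euclidean net to $M$, and then to exploit the resulting coarse picture together with $2$-dimensional tools --- Gauss--Bonnet, the structure theory of complete surfaces, and comparison geometry --- to rigidify the metric. \textbf{Step 1 (coarse density).} Writing $X=\iota(N)$ for the image of a $\delta$-dense discrete set $N\subset\RR^2$ under the isometric embedding $\iota$, I would first show that $X$ is a net in $M$: every metric ball in $M$ of a fixed radius meets $X$. (This is precisely the phenomenon the present paper establishes in all dimensions, so here it serves as the opening step.) Once this is known, $\iota$ is a quasi-isometry between $\RR^2$ and $M$, so $M$ is non-compact (infinite diameter, hence non-compact by Hopf--Rinow), has quadratic area growth $\area(B_R)\asymp R^2$, and has a single end; using in addition that the embedded distances are \emph{exactly} Euclidean --- so $M$ possesses no ``shortcuts'' relative to the coarse Euclidean structure --- one rules out nontrivial topology and concludes that $M$ is diffeomorphic to $\RR^2$.

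\textbf{Step 2 (geodesic lines from collinear chains).} If $a,b,c\in N$ with $b$ on the Euclidean segment $[a,c]$, then $d_M(\iota a,\iota c)=d_M(\iota a,\iota b)+d_M(\iota b,\iota c)$, so $\iota b$ lies on a minimizing geodesic from $\iota a$ to $\iota c$. Feeding in longer and longer near-collinear chains of net points along a fixed Euclidean line and passing to an Arzel\`a--Ascoli limit produces, for every direction and every offset, a minimizing geodesic line in $M$. A family of parallel Euclidean lines then yields a sweep-out of $M$ by geodesic lines whose pairwise distances agree with the Euclidean ones up to an additive $O(\delta)$, while lines in transverse directions yield geodesic lines that actually cross.

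\textbf{Step 3 (flatness).} Apply Gauss--Bonnet to large geodesic polygons whose vertices are net points: along each side, near-collinearity of the intermediate net points forces the geodesic boundary to ``go straight'' except near the corners, and the law of cosines in $M$ controls the turning at each corner in terms of the Euclidean angle there, so the total angle defect tends to $0$; exhausting $M$ by such polygons gives $\int_M K\,dA=0$. To upgrade this to $K\equiv 0$ I would argue by the sign of the curvature, using the geodesic lines of Step 2: a region where $K>0$ would, via the second variation of arclength, make two of the ``Euclidean-parallel'' geodesic lines approach and eventually meet a second time, impossible for distinct minimizing lines; a region where $K<0$ would make such a pair diverge, violating the $O(\delta)$ control on the distance between them. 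Hence $K\equiv 0$, and a complete, simply connected, flat surface is isometric to $\RR^2$.

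\textbf{Main obstacle.} The crux is that everything must be carried out at the \emph{fixed} scale $\delta$ of the net: there is no freedom to rescale and pass to an infinitesimal statement, so the control of the angles of geodesics, of the distances between the ``parallel'' geodesic lines, and of the polygonal angle defects in Step 3 must all be extracted from the coarse combinatorial data together with completeness. Establishing Step 1 --- that $X$ is genuinely a net --- and then converting the soft quasi-isometry it provides into the quantitative geodesic control needed in Steps 2 and 3 is where essentially all the difficulty lies.
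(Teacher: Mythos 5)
This statement is Theorem \ref{thm_12170}, which the present paper does not prove: it is quoted verbatim from \cite{EK}, where it occupies essentially the whole of that (long) paper. So there is no in-paper proof to compare against, and your submission is in any case a strategy outline rather than a proof --- as you yourself concede in the ``Main obstacle'' paragraph, the entire difficulty is deferred.

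The decisive gaps are in Step 3. First, the Gauss--Bonnet computation needs the turning angle at each corner of a geodesic polygon to be controlled by the corresponding Euclidean angle, and you invoke ``the law of cosines in $M$'' for this; but there is no law of cosines in a general Riemannian surface --- comparing Riemannian angles of geodesic triangles with Euclidean angles computed from side lengths requires two-sided curvature bounds, which is exactly what you are trying to establish. Without that, the exhaustion argument only yields (at best) that averages of $K$ over large polygons tend to zero, and $\int_M K\,dA$ need not even be defined if $K$ is not absolutely integrable. Second, even granting $\int_M K\,dA=0$, the upgrade to $K\equiv 0$ by ``sign of the curvature'' does not work as stated: a bump of positive curvature surrounded by compensating negative curvature is not excluded by the claim that positively curved regions make parallel geodesics refocus, since two distinct minimizing lines need not bound a geodesic variation passing through the bump, and the second-variation/Jacobi-field analysis that would make this rigorous (a Hopf--Green--type rigidity argument for surfaces without conjugate points) is precisely the hard analytic core that is missing. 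Step 2 also silently uses that minimizing geodesics between net points are unique (so that a collinear chain determines a single geodesic) and that the limiting lines through an arbitrary point in an arbitrary direction stay within $O(\delta)$ of their Euclidean counterparts along their entire length; neither is immediate. The actual proof in \cite{EK} proceeds quite differently, by first showing that the exponential map at every net point is a diffeomorphism and that all geodesics through net points are minimizing, then building the global-direction geodesics $\gamma_{p,v}$ and a global-to-local direction homeomorphism via a one-dimensional topological argument special to surfaces --- none of which appears in your sketch.
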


\medskip
It is still unknown whether the $n$-dimensional analog of this theorem holds true.
However, in any dimension the existence of such an embedding determines the {\it topology} of the underlying manifold.
If $X$ is a net in $\RR^{n}$ that embeds isometrically in $M$, then in \cite[Corollary 4.1]{EK} it is shown that all the geodesics passing through a point $p \in \iota(X)$ are distance minimizing. Thus the exponential map $\exp_{p}: T_{p}M \to M$ is a diffeomorphism, and in particular we obtain the following theorem. Note that in dimension $n=4$ there exist exotic smooth structures on $\RR^{4}$ according to \cite{Do, Fr}.
Theorem \ref{thm_1217_0} refers to the flat $\RR^{4}$ in this case.

\begin{theorem}
Let  $M$ be a complete, connected, $n$-dimensional Riemannian manifold.
Suppose that there exists a net in $\RR^n$ that embeds isometrically in $M$.
Then $M$ is diffeomorphic to $\RR^n$.
\label{thm_1217_0}
\end{theorem}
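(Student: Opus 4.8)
The plan is to derive the theorem directly from \cite[Corollary 4.1]{EK}, recalled just above: for every $p \in X$ the exponential map $\exp_p \colon T_pM \to M$ is a diffeomorphism. Once this is in hand the conclusion is immediate. Pick any point $p$ of the embedded net $X$. The tangent space $T_pM$ is a real vector space of dimension $n$, so there is a linear isomorphism $L \colon \RR^n \to T_pM$, and a linear isomorphism between finite–dimensional real vector spaces is a diffeomorphism of smooth manifolds. Hence $\exp_p \circ L \colon \RR^n \to M$ is a composition of diffeomorphisms, and $M$ is diffeomorphic to $\RR^n$. Completeness and connectedness of $M$ enter only through Hopf--Rinow, which guarantees that $\exp_p$ is defined on all of $T_pM$ and surjective onto $M$; both are already built into the cited corollary, so nothing further is required in the present argument.

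For orientation I would also recall the structure of \cite[Corollary 4.1]{EK}, since that is where the real work sits. The assertion that all geodesics through $p$ are minimizing says precisely that the cut locus of $p$ is empty. Granting this, the diffeomorphism claim is a standard fact of Riemannian geometry: for a complete manifold the exponential map restricts to a diffeomorphism from the open segment domain in $T_pM$ onto the complement of the cut locus, and when $\mathrm{Cut}(p) = \emptyset$ the segment domain is all of $T_pM$ and its image is all of $M$ (in particular there are then no conjugate points, so $\exp_p$ is everywhere a local diffeomorphism). Thus the whole matter reduces to ruling out cut points of $p$.

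The hard part — and the step that genuinely uses the Euclidean net — is exactly that: showing $p$ has no cut point. If it did, say at $q$, then $q$ would be joined to $p$ either by two distinct minimizing geodesics or by a geodesic with a conjugate point at $q$, and in either case $d_M(p,\cdot)$ would behave near $q$ in a way that no Euclidean distance function does. One would then exploit the exact coincidence of the $X$-distances with those of a net in $\RR^n$: the net is dense enough to place several of its points near $q$, and the combination of their mutual $M$-distances with their $M$-distances to $p$ would have to be realizable by a configuration of points in $\RR^n$ at the corresponding Euclidean distances, which a careful comparison shows is impossible near a cut point. Making this comparison quantitative, and uniform enough to overcome the fixed net scale $\delta$, is the technical crux handled in \cite{EK}; here we only invoke its conclusion.
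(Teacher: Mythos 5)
Your proof is correct and follows exactly the route the paper itself takes: Theorem \ref{thm_1217_0} is deduced immediately from \cite[Corollary 4.1]{EK} (that $\exp_p$ is a diffeomorphism for any net point $p$) by composing with a linear isomorphism $\RR^n \to T_pM$. Your closing speculation about how \cite{EK} establishes that corollary is not needed and is not vouched for here, but since you explicitly only invoke its conclusion, the argument stands as the paper intends.
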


\medskip
We consider these results part of the study of metric rigidity. A well-known conjecture in this field is the boundary distance conjecture of Michel \cite{M}, which suggests that in a simple Riemannian manifold with boundary, the collection of distances between boundary points determines the Riemannian structure, up to an isometry. To date, Michel's conjecture has been proven only in two dimensions, by Pestov and Uhlmann \cite{PU}.
 
\medskip
In this paper we show that the manifold $M$ and the Euclidean space share additional indicative geometric properties in {\it any} dimension.
The following theorem states that the image of a net in Euclidean space under an isometric embedding is also a net with respect to the Riemannian distance in the manifold.

\begin{theorem}
Let $M$ be a complete, connected, $n$-dimensional Riemannian manifold. Suppose that $X \subseteq \RR^{n}$ is a $\delta$-net for some $\delta > 0$, and that $\iota : X \to M$ is an isometric embedding. Then $\iota(X)$ is a $(2 \delta n)$-net with respect to the Riemannian distance in $M$.
\label{thm_1730}	
\end{theorem}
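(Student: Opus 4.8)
The goal is to show that every point $q \in M$ is within distance $2\delta n$ of $\iota(X)$. The plan is to fix $q \in M$, pick a base point $p \in X$, and use the rich structure guaranteed by Theorem~\ref{thm_1217_0} (or rather its sharper form in \cite[Corollary 4.1]{EK}): the exponential map $\exp_{\iota(p)}$ is a diffeomorphism and all geodesics through $\iota(p)$ are minimizing. So write $q = \exp_{\iota(p)}(v)$ for a unique $v \in T_{\iota(p)}M$, and let $r = |v| = d_M(\iota(p), q)$. I want to find some $x \in X$ with $d_M(\iota(x), q)$ small.

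**Transferring the problem to $\mathbb{R}^n$.**

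The key idea is that distances from $\iota(p)$ to all points of $\iota(X)$ are known exactly — they equal the Euclidean distances from $p$ to the corresponding points of $X$. Moreover, because geodesics from $\iota(p)$ are minimizing and $\exp_{\iota(p)}$ is a diffeomorphism, the "radial" geometry around $\iota(p)$ looks like that of $\mathbb{R}^n$ seen from $p$: the map $\Phi = \exp_{\iota(p)} \circ (d\iota \text{ at } p)^{-1} \circ \exp_p^{-1}$, or more simply the identification sending $x \in X$ to the point of $T_{\iota(p)}M$ at distance $|x-p|$ along the geodesic "in the direction of $x$", is distance-preserving from $\iota(p)$. The hope would be to find $x \in X$ whose image $\iota(x)$ lands near $q$ by choosing $x$ near the Euclidean point $\tilde q \in \RR^n$ corresponding to $q$ (same distance $r$ from $p$, "same direction"). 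Since $X$ is a $\delta$-net in $\RR^n$, there is $x \in X$ with $|x - \tilde q| < \delta$. Then $d_M(\iota(x), q) \le |d_M(\iota(p),\iota(x)) - d_M(\iota(p),q)| + (\text{angular error}) = \big||x-p| - |\tilde q - p|\big| + (\text{angular error})$, and the first term is at most $|x - \tilde q| < \delta$. I expect the factor $2\delta n$ to come from iterating or from controlling the angular error.

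**Controlling the angular error — the main obstacle.**

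The difficulty is that "direction in $\RR^n$" and "direction in $M$" are not obviously comparable: knowing $|x - p|$ for all $x \in X$ pins down radial distances but a priori says nothing about angles at $\iota(p)$. The resolution should be to use a second base point. Pick $p' \in X$ roughly in the direction of $\tilde q$ from $p$ — say with $|p' - p| \approx r + \delta$ and $p'$ nearly on the ray from $p$ through $\tilde q$, which exists since $X$ is a $\delta$-net. Now consider triangles: in $\RR^n$, $\tilde q$ lies almost on the segment $[p,p']$, so $|\tilde q - p| + |\tilde q - p'| \approx |p - p'|$. Transferring via $\iota$, we get $d_M(\iota(p), \iota(p')) = |p - p'|$ exactly, while for the point $x \in X$ near $\tilde q$ we have $d_M(\iota(p),\iota(x)) = |p - x|$ and $d_M(\iota(x),\iota(p')) = |x - p'|$, both known. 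The near-collinearity $|p-x| + |x - p'| \approx |p - p'| = d_M(\iota(p),\iota(p'))$ forces $\iota(x)$ to lie near the \emph{minimizing geodesic} from $\iota(p)$ to $\iota(p')$ in $M$ (a standard "thin triangle in a general metric space" estimate: a point whose distances to two endpoints nearly sum to the endpoint distance is close to a minimizing geodesic). And $q = \exp_{\iota(p)}(v)$ with $|v| = r$ — if the direction of $v$ is chosen to match the minimizing geodesic from $\iota(p)$ to $\iota(p')$ (which it can be, by defining $\tilde q$ appropriately via $\exp_p^{-1}$ and using that $\exp_p$ is also a diffeomorphism with minimizing geodesics), then $q$ lies on that same geodesic at distance $r$ from $\iota(p)$. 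Hence $\iota(x)$ and $q$ are two points close to the same minimizing geodesic from $\iota(p)$, one at known distance $|x-p| \approx r$ from $\iota(p)$ and the other at distance exactly $r$; a triangle-inequality chase along the geodesic then gives $d_M(\iota(x), q) \le 2\delta n$. The bookkeeping of the several $\delta$-errors (choosing $p'$, choosing $x$ near $\tilde q$, the geodesic-proximity estimate, and the final comparison), together with whatever dimensional loss arises from iterating this argument $n$ times or from a volume/packing step, is where the constant $2\delta n$ comes from, and getting all these estimates to line up cleanly is the main technical work.
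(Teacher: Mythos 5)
Your approach is genuinely different from the paper's (which builds a continuous map $\Phi:\RR^n\to M$ by gluing geodesic-cone maps over the simplices of a fine triangulation of $\RR^n$, shows every point of its image is within $n(\eps+2\delta)$ of a net point, and then proves surjectivity of $\Phi$ by a Borsuk--Ulam/degree argument), but it has a fatal gap at the step you yourself flag as the crux.

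The ``standard thin triangle estimate'' you invoke --- that a point $x$ with $d(a,x)+d(x,b)\le d(a,b)+\eps'$ must lie within $O(\eps')$ of a minimizing geodesic from $a$ to $b$ --- is false. Even in $\RR^n$ the correct bound is of order $\sqrt{\eps'\,d(a,b)}$ (a point at height $h$ over the midpoint of a segment of length $D$ has excess $\approx 2h^2/D$), so with $a=\iota(p)$, $b=\iota(p_m)$ and $|p-p_m|\to\infty$ the bound you would get blows up. Worse, in a general complete Riemannian manifold with no curvature or injectivity-radius control there is no such estimate at all: small excess only says that the concatenation of the two minimizing segments $[\iota(p),\iota(x)]$ and $[\iota(x),\iota(p_m)]$ is a near-minimizing path, and near-minimizing paths between two points need not stay close to any fixed minimizing geodesic (there may be several minimizing geodesics far from one another, in which case the excess can be exactly zero while $\iota(x)$ is far from the geodesic $\gamma_m$ you care about). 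This possible ``spreading'' of near-geodesics is precisely the phenomenon the theorem must rule out, so the step is circular in spirit as well as quantitatively wrong. A secondary issue: to define $\tilde q$ you need a correspondence between the local direction of $q$ at $\iota(p)$ and a Euclidean direction; this can be patched using the surjectivity of the global-to-local direction map from \cite[Proposition 3.7]{EK} (injectivity is not available --- it is Corollary \ref{cor_1819}, a consequence of the theorem). But even with that patch, and granting that $q$ lies on a limit of minimizing segments $\gamma_m$ to net points, the argument cannot locate a net point near $\gamma_m(r)$ without the missing geodesic-stability estimate, and the constant $2\delta n$ is never actually produced. The paper avoids all of this by never trying to locate individual net points near a given geodesic; instead it covers $M$ by a continuous image of $\RR^n$ assembled from short geodesic cones whose every point is provably close to a net point, and the only global input is topological (surjectivity via Borsuk--Ulam).
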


\medskip
Loosely speaking, the statement in Theorem \ref{thm_1730} is about medium-scale geometry. It implies that there are no mesoscopic portions of the manifold that are oblivious of the embedding.
Our next theorem states that the manifold $M$ has the property that all geodesics are distance minimizing, and therefore $M$ is without conjugate points. 
Using informal jargon again, the theorem eliminates the possibility of any ``obstruction'' in the manifold, even a microscopic one.
It demonstrates the perhaps unexpected phenomenon of inferring infinitesimal properties from discrete data.

\begin{theorem}
Let $M$ be a complete, connected, $n$-dimensional Riemannian manifold. Suppose that there exists a net in $\RR^{n}$ that embeds isometrically in $M$. Then all the geodesics in $M$ are distance minimizing, and there are no conjugate points in $M$.
\label{thm_1645}
\end{theorem}

\medskip
A common practice in the mathematical literature is to use the absence of conjugate points as an {\it assumption} on the manifold.
This has been shown to have considerable implications in a variety of cases.
In 1948, Hopf \cite{H} showed that a two-dimensional Riemannian torus without conjugate points is flat, solving a question raised by Morse and Hedlund \cite{MH}. The latter proved the theorem under the additional assumption that there are no focal points.
The validity of the $n$-dimensional analog of Hopf's result was unknown for quite some time, with considerable contributions to the subject by Busemann \cite{Bu} and others, until it was finally answered in the affirmative by Burago and Ivanov \cite{BI} in 1994.
Further literature on the rigidity of Riemannian manifolds without conjugate points includes contributions by Burns and Knieper \cite{BK}, Bangert and Emmerich \cite{BE1, BE2}, Croke \cite{Cr1, Cr2} and Kleiner \cite{CK1, CK2}, Innami \cite{INN} and others.
Note that the statement of Theorem \ref{thm_1645} is different in this sense.
To the best of our knowledge, this is the first time that the non-existence of conjugate points has been {\it derived} from a rather straightforward hypothesis in dimension $n > 2$, which does not impose a-priori restrictions on the curvature tensor.

\medskip
The techniques used in \cite{EK} for the two-dimensional setting rely heavily on the topology of $\RR^{2}$, and in particular on the intersection of certain curves.
Here we develop a completely new topological framework that works for any dimension. 
This framework is outlined in Section \ref{section_phi} and is applied for the proof of Theorem \ref{thm_1730}.
In Section \ref{sec_2} we describe a notion in the spirit of the ``ideal boundary'' of a Hadamard manifold, which we use in Section \ref{sec_3} to present a key geometric idea for the proof of Theorem \ref{thm_1645}.
This idea is then implemented on top of the above-mentioned topological framework in order to finalize the proof of the theorem in Section \ref{sec_ncp}.
In Section \ref{sec_largescale} we discuss the large-scale geometry of the manifold $M$, and in particular we conclude that it tends to the Euclidean one.

\medskip
{\it Acknowledgements.} I would like to express my deep gratitude and appreciation to my advisor, Prof. Bo'az Klartag, for his dedicated guidance and ongoing support. I would also like to thank Prof. Lev Buhovsky for the inspiring discussions and valuable suggestions.
Supported by the Adams Fellowship Program of the Israel Academy of Sciences and Humanities and the Israel Science Foundation (ISF).
 
\section{The manifold as a continuous image of Euclidean space} \label{section_phi}
 
From now on, our standing assumptions are those of Theorem \ref{thm_1730} and Theorem \ref{thm_1645}.
We thus work in a complete, connected, $n$-dimensional Riemannian manifold $M$, with  $n \geq 2$.
We assume that $\iota: X \rightarrow M$ is an isometric embedding of the $\delta$-net
$ X \subseteq \RR^n$, and assume for convenience that $0 \in X$. 
For ease of reading, and with a slight abuse of notation, we identify between a point
$a \in X \subseteq \RR^n$ and its image $\iota(a) \in M$. 
Thus we think of $X$ as a subset of $M$, and the assumption that $\iota$ is an isometric embedding translates to
\begin{equation}  
	d(x,y) = |x-y| \qquad \qquad \qquad \text{for all} \ x, y \in X, \label{eq_600}
\end{equation}
where $d$ is the Riemannian distance function in $M$.
Note that for  $p \in X \subseteq M$, we may speak of the Euclidean norm $|p| = \sqrt{\sum_i p_i^2} = d(0,p)$ and of the scalar product $\langle p, v \rangle = \sum_i p_i v_i$ for $v \in \RR^n$.

\medskip
We would like to define a continuous map on a triangulation of the Euclidean space. 
It will therefore be more convenient for us to work with a certain lattice in $\RR^{n}$ rather than with the net $X$.
For $\eps > 0$ we write 
$$
L_{\eps} = \eps \cdot \ZZ^{n} \subseteq \RR^{n}
$$ 
for an $\eps$-scale of the lattice $\ZZ^{n}$.
For a point $p \in L_{\eps}$ we write
$$
Q_{\eps}(p) = p + [0,\eps]^{n} \subseteq \RR^{n}
$$
for a cube of side-length $\eps$, and $\text{ver}(Q_{\eps}(p))$ for its set of vertices.
We say that $\nu : \RR^{n} \to X$ is a 
\textit{net-rounding} map if for any $x \in \RR^{n}$ we have $\delta_{\nu}(x) := |x-\nu(x)| < \delta$.
Since $X$ is a $\delta$-net in $\RR^{n}$ we know that such a map exists.

\begin{lemma}
Suppose that for any $\eps > 0$ and any net-rounding map $\nu : \RR^{n} \to X$ there exists a surjective map 
$ \Phi_{\eps, \nu} : \RR^{n} \to M$ such that for any $p \in L_{\eps}$ and $x \in Q_{\eps}(p)$ we have
\begin{equation}
d(\Phi_{\eps, \nu}(x), \nu(p)) \leq n \cdot \left( \eps + 2 \max \left\{ \delta_{\nu}(q) : q \in \text{ver}(Q_{\eps}(p)) \right\}\right).
\label{eq_1838}
\end{equation}
Then $X$ is a $(2\delta n)$-net with respect to the Riemannian distance in $M$.
\label{lem_1251}
\end{lemma}

\begin{proof}
Write $B_{\RR^{n}}(x,r)$ for the open Euclidean ball of radius $r > 0$ around $x \in \RR^{n}$, and $\overline{B}_{\RR^{n}}(x,r)$ for its closure.
Let $y \in M$, and let $\nu: \RR^{n} \to X$ be some arbitrary net-rounding map.
By the assumptions of the lemma, there exists a surjective map $\Phi_{1,\nu} : \RR^{n} \to M$ for which (\ref{eq_1838}) holds true.
Let $x_{1} \in \RR^{n}$ and $p_{1} \in L_{1}$ be such that $\Phi_{1,\nu}(x_{1}) = y$ and $x_{1} \in Q_{1}(p_{1})$. Then 
\begin{equation}
d(y, \nu(p_{1})) = d(\Phi_{1,\nu}(x_{1}),\nu(p_{1})) < n \cdot (1 + 2\delta).
\label{eq_1236}
\end{equation}
Set $R = 2n \left( 1 + 2\delta \right) + 2\delta + \sqrt{n}$.
Since the Euclidean distance from the set $X$ is a continuous function, there exists $\tilde{\delta} < \delta$ such that
$ d_{\RR^{n}}(z,X) < \tilde{\delta}$ for any $z \in \overline{B}_{\RR^{n}}(p_{1},R)$.
We may therefore define a net-rounding map $\tilde{\nu} : \RR^{n} \to X$ such that 
\begin{equation}
\delta_{\tilde{\nu}}(z) = |\tilde{\nu}(z) - z| < \tilde{\delta} 
\qquad \text{for any} \quad 
z \in \overline{B}_{\RR^{n}}(p_{1},R).
\label{eq_1245}
\end{equation}
By the assumptions of the lemma, for any $0 < \eps < 1$ there exists a surjective map  $\Phi_{\eps, \tilde{\nu}} : \RR^{n} \to M$ for which (\ref{eq_1838}) holds true.
Thus there exist $x_{\eps} \in \RR^{n}$ and $p_{\eps} \in L_{\eps}$ such that $\Phi_{\eps, \tilde{\nu}}(x_{\eps}) = y$, $x_{\eps} \in Q_{\eps}(p_{\eps})$ and
\begin{equation}
d(y,\tilde{\nu}(p_{\eps})) = 
d(\Phi_{\eps,  \tilde{\nu}}(x_{\eps}),\tilde{\nu}(p_{\eps})) < 
n \cdot (1 + 2\delta).
\label{eq_1237}
\end{equation}
By the triangle inequality together with (\ref{eq_1236}) and (\ref{eq_1237}) we obtain that
\begin{equation}
|p_{\eps} - p_{1}| < d(\tilde{\nu}(p_{\eps}),\nu(p_{1})) + 2\delta \leq d(\tilde{\nu}(p_{\eps}),y) + d(y,\nu(p_{1})) + 2\delta <  2n \left( 1 + 2\delta \right) + 2\delta.
\label{eq_1240}
\end{equation}
Moreover, for any $q \in \text{ver}(Q_{\eps}(p_{\eps}))$ we have that $|q - p_{\eps}| \leq \eps \sqrt{n} < \sqrt{n}$. It therefore follows from (\ref{eq_1240}) and the triangle inequality that
\begin{equation}
\text{ver}(Q_{\eps}(p_{\eps})) \subseteq B_{\RR^{n}}(p_{1}, R).
\label{eq_1243}
\end{equation}
Then for $\eps_{0} = \min \{ 1/2, \delta - \tilde{\delta} \}$ we obtain from (\ref{eq_1838}), (\ref{eq_1245}) and (\ref{eq_1243}) that 
$$
d(y, \tilde{\nu}(p_{\eps_{0}})) = 
d(\Phi_{\eps_{0}, \tilde{\nu}}(x_{\eps_{0}}),\tilde{\nu}(p_{\eps_{0}})) <
n \cdot (\eps_{0} + 2\tilde{\delta})\leq 
n \cdot (\delta + \tilde{\delta}) < 2\delta n. 
$$
Therefore $d(y,X) < 2\delta n$, and the proof is completed.
\end{proof}

From now until the end of the section, we fix $\eps > 0$ and a net-rounding map $\nu : \RR^{n} \to X$.
According to Lemma \ref{lem_1251}, in order to prove Theorem \ref{thm_1730} it suffices to construct a surjective map $\Phi_{\eps, \nu} : \RR^{n} \to M$ that satisfies (\ref{eq_1838}) .
We omit the $\eps$ and $\nu$ subscripts from the notation described above.
Hence the lattice $\eps \cdot \ZZ^{n}$ will be denoted by $L$, the cube of side-length $\eps$ by $Q(p)$, and for $x \in \RR^{n}$ we will write $\delta(x) = |x-\nu(x)|$.
For convenience we will use the notation $\tilde{x} = \nu(x)$ for a point $x \in \RR^{n}$. 
Since $\nu$ is a net-rounding map, by the triangle inequality we have that
\begin{equation}
\left| d(\tilde{x}_{1}, \tilde{x}_{2}) - |x_{1} - x_{2}| \right| \leq \delta(x_{1}) + \delta(x_{2})< 2\delta
\qquad \text{for any } x_{1},x_{2} \in \RR^{n}.
\label{eq_1651}
\end{equation}

\subsection{Continuous image of a simplex}
\label{subsec_simplex}

The primary building block of our construction of $\Phi$ is a continuous image of a $k$-simplex in $\RR^{n}$, for $0 \leq k \leq n$.
The points $\{ p_{1},p_{2},...,p_{k}\} \subseteq \RR^{n}$ are said to be affinely independent if the vectors $\{p_{2}-p_{1},p_{3}-p_{1},...,p_{k}-p_{1}\}$ are linearly independent. 
A $k$-simplex in $\RR^{n}$ is the convex hull of $k+1$ affinely independent points.
We denote the convex hull of the points $p_{1},...,p_{k} \in \RR^{n}$ by $\text{conv}(p_{1},....,p_{k}) \subseteq \RR^{n}$, and we refer to a point $\sum_{j=1}^{k}\lambda_{j}p_{j} \in \text{conv}(p_{1},...,p_{k})$ by its barycentric coordinates $(\lambda_{1},\lambda_{2},...,\lambda_{k})$.
The maps we define are parametrized by \textit{ordered} sequences of vertices.
Given points $p_{1},...,p_{k} \in \RR^{n}$, a subset $J \subseteq \{1,...,k\}$ naturally corresponds to an ordered subsequence of these points: if $J = \{j_{1},...,j_{m}\}$ where $1 \leq j_{1} < j_{2} < ... < j_{m} \leq k$, then we denote
$$
J(p_{1},...,p_{k}) = (p_{j_{1}},p_{j_{2}},...,p_{j_{m}}).
$$
Let us construct a ``simplex mapping'' $\Delta$ that for any $0 \leq k \leq n$ and $k+1$ affinely independent points $u_{0},u_{1},...,u_{k} \in \RR^{n}$ assigns a continuous map
$$
\Delta[u_{0},u_{1},...,u_{k}] : \text{conv}(u_{0},u_{1},...,u_{k}) \to M,
$$
such that the following hold:
\begin{enumerate}
\item [(i)] For $k = 0$ we have $\Delta[u_{0}] = \nu$, i.e. $\Delta[u_{0}](u_{0}) = \tilde{u}_{0}$.

\item[(ii)] The mapping $\Delta$ is consistent with the simplicial complex structure, i.e. for any $1 \leq k \leq n$ and affinely independent $u_{0},u_{1},...,u_{k} \in \RR^{n}$ we have that
$$
\Delta[u_{0},u_{1},...,u_{k}] \big|_{\text{conv}(J(u_{0},u_{1},...,u_{k}))} = 
\Delta[J(u_{0},u_{1},...,u_{k})]
\qquad \text{for any } J \subseteq \{0,1,...,k\}.
$$
\item [(iii)] For any $1 \leq k \leq n$ and affinely independent $u_{0},u_{1},...,u_{k} \in \RR^{n}$ we have
$$
d (y, \tilde{u}_{0}) \leq 
\sum_{i=0}^{k-1} d (\tilde{u}_{i},\tilde{u}_{i+1}) 
\qquad \text{for any } y \in \Im \left( \Delta[u_{0},u_{1},...,u_{k}] \right),
$$
\end{enumerate}
where $\Im(f)$ stands for the image of the map $f$.
The definition of $\Delta$ is done by induction on $k$.
For a single point $u_{0} \in \RR^{n}$ we define $\Delta[u_{0}]$ as required by (i). 
Suppose that $1 \leq m \leq n$ and let $u_{0},...,u_{m} \in \RR^{n}$ be affinely independent.
Write 
$$
\Delta_{0}^{m-1} = \Delta[u_{1},...,u_{m}]: \text{conv}(u_{1},...,u_{m}) \to M
$$
for the continuous map given by the induction hypothesis.
For any point given in barycentric coordinates by $(\theta_{1},...,\theta_{m}) \in \text{conv}(u_{1},...,u_{m})$ there exists a unique minimizing geodesic segment of constant speed $\gamma_{(\theta_{1},...,\theta_{m})}: [0,1] \to M$ such that 
$$
\gamma_{(\theta_{1},...,\theta_{m})}(0) = \tilde{u}_{0}
\qquad \text{and} \qquad \gamma_{(\theta_{1},...,\theta_{m})}(1) = \Delta_{0}^{m-1}(\theta_{1},...,\theta_{m}).
$$
In case $\tilde{u}_{0} = \Delta_{0}^{m-1}(\theta_{1},...,\theta_{m})$, we simply let $\gamma_{(\theta_{1},...,\theta_{m})}$ be the constant path.
Define the map $\Delta^{m} = \Delta[u_{0},u_{1},...,u_{m}] : \text{conv}(u_{0},u_{1},...,u_{m}) \to M$ by
$$
\Delta^{m}(\lambda_{0},\lambda_{1},...,\lambda_{m}) =
\begin{cases}
\tilde{u}_{0} , & \text{if } \lambda_{0} = 1,  \\
\gamma_{\left( \frac{\lambda_{1}}{1-\lambda_{0}},\frac{\lambda_{2}}{1-\lambda_{0}},..., \frac{\lambda_{m}}{1-\lambda_{0}} \right)}(1-\lambda_{0}), & \text{if } 0 \leq \lambda_{0} < 1.
\end{cases}
$$

\begin{figure}
\begin{center} \includegraphics[width=4in]{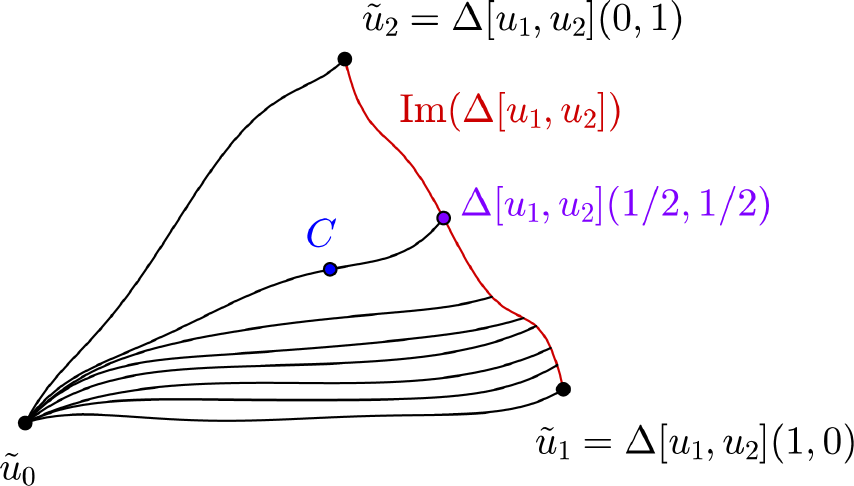} \end{center}
\caption{An illustration of the recursive construction of the map $\Delta[u_{0},u_{1},u_{2}]$. The line in $\RR^{n}$ connecting $u_{1}$ and $u_{2}$ is mapped to the minimizing geodesic connecting the points $\tilde{u}_{1}$ and $\tilde{u}_{2}$ in $M$, illustrated by the \textcolor{red}{red} segment.
Then the interior of the triangle in $\RR^{n}$ with vertices $u_{0}$, $u_{1}$ and $u_{2}$ is mapped to the union of geodesic segments connecting $\tilde{u}_{0} \in M$ to the above-mentioned segment.
As an example, the point $C$ is drawn as the image of the triangle's centroid, that is $C = \Delta[u_{0},u_{1},u_{2}](1/3,1/3,1/3)$.}
\end{figure}

\noindent The continuity of $\Delta^{m}$ follows from the continuity of $\Delta_{0}^{m-1}$, which is assumed by the induction hypothesis, and the fact that the exponential map at $\tilde{u}_{0}$ is a diffeomorphism, thanks to \cite[Corollary 4.1]{EK}. Let us show that property (iii) holds true.
In our construction, any point of $\text{Im}(\Delta^{m})$ lies on a minimizing geodesic segment connecting $\tilde{u}_{0}$ and a point of $\Im(\Delta_{0}^{m-1})$. Together with the induction hypothesis and the triangle inequality, we therefore obtain that
\begin{align*}
\sup \{d(\tilde{u}_{0},y) : y \in \Im(\Delta^{m})\} &=
\sup \{d(\tilde{u}_{0},y) : y \in \Im(\Delta_{0}^{m-1}) \} 
\\ & \leq 
d(\tilde{u}_{0},\tilde{u}_{1}) + \sup \{ d(\tilde{u}_{1},y) : y \in \Im(\Delta_{0}^{m-1})\} \leq 
\sum_{i=0}^{m-1} d(\tilde{u}_{i},\tilde{u}_{i+1}),
\end{align*}
so that property (iii) holds true.
By the induction hypothesis, for the proof of property (ii) it suffices to show that
$$
\Delta^{m} \big|_{\text{conv}(u_{0},...,\hat{u}_{j},...,u_{m})} = 
\Delta[u_{0},...,\hat{u}_{j},...,u_{m}] =: \Delta^{m-1}_{j}
\qquad \text{for any } \; 0 \leq j \leq m, 
$$
where $\hat{u}_{j}$ stands for the omission of the element with index $j$ from the expression.
For $j=0$ this is clear, since for any $(\lambda_{1},...,\lambda_{m}) \in \text{conv}(u_{1},...,u_{m})$ we have
$$
\Delta^{m}(0,\lambda_{1},...,\lambda_{m}) =  \gamma_{(\lambda_{1},...,\lambda_{m})}(1) = \Delta_{0}^{m-1}(\lambda_{1},...,\lambda_{m}).
$$
The proofs for all other indices are symmetric, hence it suffices to show that
\begin{equation}
\Delta^{m} \big|_{\text{conv}(u_{0},u_{2},...,u_{m}) } = \Delta_{1}^{m-1}.
\label{eq_1635}
\end{equation}
For any $(\theta_{2},...,\theta_{m}) \in \text{conv}(u_{2},...,u_{m})$ let $\eta_{(\theta_{2},...,\theta_{m})}: [0,1] \to M$ be the minimizing geodesic of constant speed with
$$
\eta_{(\theta_{2},...,\theta_{m})}(0) = \tilde{u}_{0}
\qquad \text{and} \qquad \eta_{(\theta_{2},...,\theta_{m})}(1) = \Delta[u_{2},...,u_{m}](\theta_{2},...,\theta_{m}).
$$ 
According to the induction hypothesis, for any $(\theta_{2},...,\theta_{m}) \in \text{conv}(u_{2},...,u_{m})$ we have that
$$
\gamma_{(0,\theta_{2},...,\theta_{m})}(1) = \Delta_{0}^{m-1}(0, \theta_{2},...,\theta_{m}) = \Delta[u_{2},...,u_{m}](\theta_{2},...,\theta_{m}) = \eta_{(\theta_{2},...,\theta_{m})}(1).
$$
Hence the geodesics $\gamma_{(0,\theta_{2},...,\theta_{m})}$ and $\eta_{(\theta_{2},...,\theta_{m})}$ coincide, as they are both defined to be the minimizing geodesic segment connecting the same endpoints.
Thus for any $(\lambda_{0},\lambda_{2},...,\lambda_{m}) \in \text{conv}(u_{0},u_{2},...,u_{m})$ we have that
$$
\Delta^{m}(\lambda_{0},0,\lambda_{2},...,\lambda_{m}) = 
\begin{cases}
\tilde{u}_{0} , & \text{if } \lambda_{0} = 1,  \\
\eta_{\left( \frac{\lambda_{2}}{1-\lambda_{0}},..., \frac{\lambda_{m}}{1-\lambda_{0}} \right)}(1-\lambda_{0}), & \text{if } 0 \leq \lambda_{0} < 1
\end{cases}
= \Delta_{1}^{m-1}(\lambda_{0},\lambda_{2},...,\lambda_{m}).
$$
Therefore (\ref{eq_1635}) holds true, and property (ii) is valid as well.

\subsection{Gluing the simplex maps} \label{subsec_gluing}

In order to obtain the continuous map $\Phi$ defined on the entire $\RR^{n}$, we glue together $\Delta$-maps. 
According to property (ii), the gluing is possible as long as the order of the vertices is consistent.
We consider the lexicographic order on $\RR^{n}$: for two different points 
$x = (x_{1},...,x_{n})\in \RR^{n}$ and 
$y = (y_{1},...,y_{n})\in \RR^{n}$ we say that $x \prec y$ if $x_{j_{0}} < y_{j_{0}}$ for $j_{0} = \min \{ 1 \leq  j \leq n : x_{j} \neq y_{j} \}$. 
For a simplex $\sigma \subseteq \RR^{n}$ of dimension $k$ we enumerate its vertices by $\text{ver}(\sigma) = \{v_{0},v_{1},...,v_{k}\}$ where $v_{0} \prec v_{1} \prec ... \prec v_{k}$, and denote the corresponding continuous $\Delta$-map by
$$
\Delta[\sigma] = \Delta[v_{0},v_{1},...,v_{k}] : \sigma \to M.
$$ 
If $\sigma_{1},\sigma_{2} \subseteq \RR^{n}$ are two simplices that share a common face $\tau$, then by property (ii) we have that
\begin{equation}
\Delta[\sigma_{1}] \big|_{\tau} = 
\Delta[\tau] = 
\Delta[\sigma_{2}] \big|_{\tau}.
\label{eq_1729}
\end{equation}
Recall that for a point $p \in L$ we defined the cube $Q(p) = p + [0,\eps]^{n} \subseteq \RR^{n}$ of side-length $\eps$, and denoted its set of vertices by $\text{ver}(Q(p))$.
Such a cube admits a triangulation which consists of $n!$ simplices.
Write $S_{n}$ for the group of permutations on $\{ 1,...,n \}$. 
For a point $p \in L$ and a permutation $\pi \in S_{n}$ we define the simplex
\begin{equation}
\sigma (p,\pi) = \text{conv} \left\{ p, \;
p + \eps e_{\pi(1)}, \;
p + \eps e_{\pi(1)} + \eps e_{\pi(2)},..., \; 
p + \sum_{l = 1}^{n} \eps e_{\pi(l)} 
\right\} \subseteq \RR^{n},
\label{eq_1642}
\end{equation}
where $e_{j}$ is the $j$-th element of the standard basis in $\RR^{n}$.
Then $Q(p) = \bigcup_{\pi \in S_{n}} \sigma(p, \pi)$. 
Moreover, by property (iii) together with (\ref{eq_1651}), for any $p \in L$ and $\pi \in S_{n}$ we have
\begin{align}
\sup \{ d (\tilde{p}, y) : y \in \Im(\Delta[\sigma(p,\pi)])\} &\leq 
\sum_{m = 1}^{n} d \left( 
\nu \left( p + \sum_{l=1}^{m-1} \eps e_{\pi(l)}\right) , 
\nu \left( p + \sum_{l=1}^{m} \eps e_{\pi(l)} \right) \right) 
\nonumber \\& \leq n \cdot \left( \eps + 2 \max \left\{ \delta(q) : q \in \text{ver}(Q(p)) \right\} \right).
\label{eq_1636}
\end{align}
Thus by gluing together all the maps $\{\Delta[\sigma(p, \pi)] : p \in L, \; \pi \in S_{n}\}$, which is possible thanks to (\ref{eq_1729}), we obtain a continuous map $\Phi : \RR^{n} \to M$ for which assumption (\ref{eq_1838}) of Lemma \ref{lem_1251} holds true. In order to show that the assumptions of Lemma \ref{lem_1251} are satisfied, and consequently prove Theorem \ref{thm_1730}, it therefore remains to show that $\Phi$ is surjective.

\subsection{Continuous additive quasi-isometries are surjective} \label{subsec_surjective}

We say that a map $\varphi: N \to M$ between two metric spaces $(N, d_{N})$ and $(M, d_{M})$ is a $(\lambda, R)$-quasi-isometry for $\lambda \geq 1$ and $R \geq 0$ if
\begin{equation}
\lambda^{-1} d_{N}(x_{1},x_{2}) - R \leq
d_{M}(\varphi(x_{1}),\varphi({x_{2}})) \leq
\lambda d_{N}(x_{1},x_{2}) + R
\qquad \text{for any } x_{1},x_{2} \in N.
\label{eq_1512}
\end{equation}
If $\lambda = 1$ we say that $\varphi$ is an additive quasi-isometry.

\begin{proposition}
Let $1 \leq \lambda < \sqrt{3}$ and $R \geq 0$. Let $\varphi: N \to M$ be a continuous $(\lambda, R)$-quasi-isometry between two complete, connected $n$-dimensional Riemannian manifolds, and suppose that there exists a point $o \in N$ such that:
\begin{enumerate}
\item[(i)] all geodesics in $N$ passing through $o$ are minimizing throughout, and
\item[(ii)] the exponential map $\exp_{\varphi(o)}: T_{\varphi(o)}M \to M$ is a diffeomorphism.
\end{enumerate} 
Then $\varphi$ is surjective.
\label{prop_1711}
\end{proposition}

\begin{proof}
Using assumption (ii), we may define the continuous radial projection $v_{o}: M \setminus \{\varphi(o)\} \to S_{\varphi(o)}M = \{ u \in T_{\varphi(0)} M \; ; \; \Vert u \Vert = 1\}$ by 
$$
v_{o}(y) = \frac{\exp_{\varphi(o)}^{-1}(y)}{\Vert \exp_{\varphi(o)}^{-1}(y) \Vert}.
$$
Since geodesic rays emanating from $\varphi(o)$ are minimizing, for any $y_{1},y_{2} \in M \setminus \{ \varphi(o)\}$ we have that
\begin{equation}
v_{o}(y_{1}) = v_{o}(y_{2}) 
\implies
|d_{M}(\varphi(o), y_{1}) - d_{M}(\varphi(o), y_{2})| = 
d_{M}(y_{1},y_{2}).
\label{eq_1543}
\end{equation}
For $r \geq 0$ define $\varphi_{r} : S_{o}N \to M$ by $\varphi_{r}(u) = \varphi(\gamma_{u}(r))$, where $\gamma_{u}: [0, \infty) \to N$ is the minimizing geodesic ray emanating from $o$ in direction $u$. 
According to (\ref{eq_1512}) we have that
\begin{equation}
\lambda^{-1}r - R \leq d_{M}(\varphi(o), \varphi_{r}(u)) \leq \lambda r + R
\qquad \text{for any } r \geq 0 \text{ and } u \in S_{o}N.
\label{eq_1524}
\end{equation}
Hence for any $r > \lambda R$ we have $\varphi_{r}(S_{o}N) \subseteq M \setminus \{\varphi(o)\}$, and therefore $v_{o} \circ \varphi_{r} : S_{o}N \to S_{\varphi(o)}M$ is well-defined. 
Moreover, using (\ref{eq_1524}) we see that for any $r > 0$ and $u,v \in S_{o}N$,
\begin{equation}
\left| 
d_{M}(\varphi(o), \varphi_{r}(u)) - 
d_{M}(\varphi(o), \varphi_{r}(v))
\right| \leq 
( \lambda - \lambda^{-1} ) r + 2R.
\label{eq_1537}
\end{equation}
Combining (\ref{eq_1512}) together with assumption (i), for any $r > 0$ and $u \in S_{o}M$ we have that
\begin{equation}
d_{M}(\varphi_{r}(u), \varphi_{r}(-u)) \geq 
\lambda^{-1}d_{N}(\gamma_{u}(r), \gamma_{u}(-r)) - R =
2\lambda^{-1}r - R.
\label{eq_1531}
\end{equation}
Therefore by (\ref{eq_1537}) and (\ref{eq_1531}), for any $r > R_{0} := 3\lambda R / (3- \lambda^{2})$ and $u \in S_{o}N$ we have
$$
\left| 
d_{M}(\varphi(o), \varphi_{r}(u)) - 
d_{M}(\varphi(o), \varphi_{r}(-u))
\right| < d_{M}(\varphi_{r}(u), \varphi_{r}(-u)).
$$
Hence by (\ref{eq_1543}) we obtain that
\begin{equation} 
v_{o}(\varphi_{r}(u)) \neq v_{o}(\varphi_{r}(-u))
\qquad \text{for any } r > R_{0} \text{ and } u \in S_{o}N.
\label{eq_1545}
\end{equation}
	
\medskip
Now suppose that $\varphi$ is not surjective, i.e. $\varphi(N) \subseteq M \setminus \{y\}$ for some $y \in M$. Let $\gamma$ be a minimizing geodesic connecting $\varphi(o)$ and $y$, and let $T := \{ z \in M : d_{M}(z,\gamma) < 1 \}$ be a neighborhood of $\gamma$. Since $T$ is open and connected, there exists a diffeomorphism $F : M \to M$ such that $F \big|_{M \setminus T}$ is the identity and $F(y) = \varphi(o)$. Then $(F \circ \varphi)(N) \subseteq M \setminus \{\varphi(o)\}$	and for any $r > 0$ the map $H_{r} : S_{o}N \times [0,1] \to M \setminus \{ \varphi(o)\}$ given by $H_{r}(u,t) = F(\varphi_{tr}(u))$ is well-defined and continuous.
For any $z \in T$ we have that $d_{M}(z, \varphi(o)) < 1 + d_{M}(y, \varphi(o))$.
Thus from (\ref{eq_1512}) it follows that
$$
\varphi_{r}(S_{o}N) \subseteq M \setminus T
\qquad \text{for any } r > R_{1} := \lambda  \left( d_{M}(\varphi(o),y) + 1 + R \right).
\label{eq_1559}
$$
Hence for any $r > R_{1}$ we have that $F \circ \varphi_{r} = \varphi_{r}$ and we see that $H_{r}$ is in fact a null-homotopy of $\varphi_{r}$. 
Therefore $v_{o} \circ H_{r} : S_{o}N \times [0,1] \to S_{\varphi_{o}}M$ is a null-homotopy of $v_{o} \circ \varphi_{r}$.
However, according to the Borsuk-Ulam theorem, we see that (\ref{eq_1545}) implies that such a null-homotopy does not exist whenever $r > R_{0}$.
\end{proof}

Observe that the continuous map $\Phi: \RR^{n} \to M$ constructed in the previous section is an additive quasi-isometry. 
Indeed, since it satisfies assumption (\ref{eq_1838}) of Lemma \ref{lem_1251}, there exists $C > 0$ such that $d(\Phi(x), \tilde{p}) \leq C$ for any $p \in L$ and $x \in Q(p)$. Hence for $x_{1},x_{2} \in \RR^{n}$ we let $p_{1}, p_{2} \in L$ be such that $x_{i} \in Q(p_{i})$ for $i \in \{1,2\}$, and we have that
$ |x_{i} - \tilde{p}_{i}| \leq \eps \sqrt{n} + \delta $
and
$ d(\Phi(x_{i}), \tilde{p}_{i}) \leq C$. It therefore follows from the triangle inequality that 
$$
\left| d(\Phi(x_{1}), \Phi(x_{2})) - | x_{1} - x_{2} | \right| \leq
2 \left( C + \eps \sqrt{n} + \delta \right),
$$
so that $\Phi$ is an additive quasi-isometry.
Moreover, according to \cite[Corollary 4.1]{EK} we have that assumptions (i) and (ii) from Proposition \ref{prop_1711} hold for any point of $X$. Hence we conclude that $\Phi$ is surjective.
The proof of Theorem \ref{thm_1730} is therefore completed according to Lemma \ref{lem_1251}.

\section{The ideal boundary}
\label{sec_2}

Let us move on to the proof of Theorem \ref{thm_1645}.
We say that a geodesic $\gamma: \RR \to M$ is a transport line of a $1$-Lipschitz function $f: M \to \RR$ if 
$$
f(\gamma(t)) - f(\gamma(s)) = t-s
\qquad \text{for all } s,t \in \RR.
$$ 
Thus the function $f$ grows with unit speed along a transport line, and any transport line is a distance minimizing geodesic. 
Moreover, the function $f$ is differentiable at $\gamma(t)$ for any $t \in \RR$, and we have
\begin{equation}
\nabla f(\gamma(t)) = \dot{\gamma}(t).
\label{eq_1740}
\end{equation}
We say that a $1$-Lipschitz function $f: M \rightarrow \RR$ induces a foliation by transport lines, or in short foliates, if
for any $x \in M$ there exists a transport line of $f$ that contains $x$.
In this case $M$ is the disjoint union of the transport lines of $f$, and the function $f$ is differentiable everywhere in $M$.
The following lemma states that the gradient of a continuous variation of $1$-Lipschitz foliating functions varies continuously as well.
A proof for the lemma is given in \cite[Lemma 2.5]{EK}.

\begin{lemma} Let $V$ be a metric space, and assume that with any $v \in V$ we associate a $1$-Lipschitz function $f_{v}: M \rightarrow \RR$.
	Suppose that $f_{v}$ foliates for any $v \in V$, and that $f_{v}(x)$ varies continuously with $v \in V$ for any fixed $x \in M$. Then the map
	$$ 
	(x, v) \mapsto \nabla f_{v}(x) 
	$$
	is continuous in $M \times V$.
	\label{lem_1835}
\end{lemma}

\medskip
In \cite{EK} we introduce the following notion, in the spirit of the ``ideal boundary'' of a Hadamard manifold. See \cite{BG} for information about the ideal boundary.
For $v \in S^{n-1}$ we write
$$
\partial_{v}M = 
\{ B : M \to \RR  \, ; \, B \text{ is } 1 \text{-Lipschitz with } B(p) = \langle p,v \rangle \text{ for all } p \in X\}.
$$
The set $\partial_{v}M$ is non-empty for any $v \in S^{n-1}$ according to \cite[Lemma 3.4]{EK}, and clearly
\begin{equation}  
	\partial_{-v} M = -\partial_v M := \{ -B \, ; \, B \in \partial_v M \}. 
	\label{eq_1929}
\end{equation}
For $v \in S^{n-1}$ we can therefore define
$$
B_{v}(x) = \inf_{B \in \partial_{v}M} B(x),
\qquad
B^{v}(x) = \sup_{B \in \partial_{v}M} B(x) = -B_{-v}(x),
\qquad \text{and} \qquad
f_{v} = B^{v} - B_{v}.
$$
The functions $B_{v}$ and $B^{v}$ both foliate and belong to $\partial_{v}M$, as shown in \cite[Proposition 4.3]{EK}. The difference $f_{v}$ is easily seen to be non-negative.
A sequence of points $p_m \in \RR^n \ \ (m=1,2,\ldots)$ is said to be drifting in the direction of $v \in S^{n-1}$, and we write $ p_m \rightsquigarrow v $, if
$$
|p_m| \xrightarrow{m \to \infty} \infty \qquad \text{and} \qquad \frac{p_m}{|p_m|} \xrightarrow{m \to \infty} v. 
$$
We say that $p_{m}  \rightsquigarrow v$ narrowly if 
$$
|p_m| \xrightarrow{m \rightarrow \infty} \infty  \qquad \text{and} \qquad |p_m| - \langle p_m, v \rangle \xrightarrow{m \rightarrow \infty}  0. 
$$
The assumption that $X$ is a net in $\RR^{n}$ implies that for any $v \in S^{n-1}$ there exists a sequence $(p_{m})_{m \geq 1}$ in $X$ such that $p_{m} \rightsquigarrow v$ narrowly, see \cite[Lemma 3.3]{EK} for a proof. 
Given $p \in X$ we write $d_{p}:M\to\RR$ for the function
$$ 
d_p(x) = d(p, 0) - d(p, x),
$$
which is a $1$-Lipschitz function that vanishes at $0 \in X \subseteq M$.
If $p_{m} \rightsquigarrow v$ narrowly, then $d_{p_{m}} \longrightarrow B_{v}$ locally-uniformly as $m \rightarrow \infty$, as explained in the proof of \cite[Proposition 4.3]{EK}.
The following lemma states that Theorem \ref{thm_1645} follows from the fact that $B_{v} \equiv B^{v}$ for all $v \in S^{n-1}$. A proof based on \cite[Lemma 6.1]{EK} and \cite[Corollary 6.2]{EK} is given below.

\begin{lemma} Suppose that for any $v \in S^{n-1}$ we have $B_{v} \equiv B^{v}$, i.e that $\partial_{v}M$ is a singleton.
Then for any $x \in M$, the map  $S^{n-1} \ni v \mapsto \nabla B_v(x) \in S_x M$ is continuous and onto. Therefore all geodesics in $M$ are minimizing, and there are no conjugate points in $M$.
	\label{lem_1841}
\end{lemma}

\begin{proof}
	Let us show that $v \mapsto B_{v}(x)$ is continuous in $v \in S^{n-1}$ for any fixed $x \in M$.
	Recall that the $1$-Lipschitz function $B_v$ vanishes at $0$ for any $v \in S^{n-1}$. Let $v_m \longrightarrow v$ be a sequence in $S^{n-1}$.
	By the Arzela-Ascoli theorem, we may pass to a subsequence and assume that $(B_{v_m} )_{m \geq 1}$ converges locally-uniformly to some $1$-Lipschitz function $B$.
	For any $p\in X$ we have
	$$
	B(p)=\lim_{m \rightarrow \infty} B_{v_m}(p)=\lim_{m \rightarrow \infty}\langle p, v_m\rangle=\langle p, v\rangle,
	$$
	and hence $B \in \partial_v M$. 
	Using the assumption that $\partial_{v}M$ is a singleton we have that $B \equiv B_{v}$ and the map $v \mapsto B_v(x)$
	is continuous in $v \in S^{n-1}$.
	Since the function $B_{v}$ foliates, Lemma \ref{lem_1835} implies the continuity of the map $S^{n-1} \ni v \to \nabla B_{v}(x) \in S_{x}M$.
	Using the fact that  $-B_v \in \partial_{-v} M$ and the assumption that $\partial_{-v}M$ is a singleton, we obtain that $-B_{v} \equiv B_{-v}$ for any $v \in S^{n-1}$.
	Thus $v \mapsto \nabla B_v(x)$ is a continuous, odd map from $S^{n-1}$ to $S_{x} M$, hence its Brouwer degree is odd
	and the map is onto.
	
	\medskip
	Therefore for any $x \in M$ and a complete geodesic $\gamma$ with $\gamma(0) = x$, there exists $v \in S^{n-1}$ such that $\dot{\gamma}(0) = \nabla B_v(x)$.
	Since the function $B_{v}$ foliates, the geodesic $\gamma$ is a transport line of $B_{v}$, and hence it is a minimizing geodesic.
	Since a complete, minimizing geodesic cannot contain a pair of conjugate points, there are no conjugate points in $M$. 
\end{proof}

In view of Lemma \ref{lem_1841}, it is therefore our goal to prove the following proposition, for which we dedicate the following sections.

\begin{proposition}
	For any $v \in S^{n-1}$ we have $f_{v} \equiv B^{v} - B_{v} \equiv 0$ and hence $\partial_{v}M$ is a singleton.
	\label{prop_1530}
\end{proposition}

\section{The deficit in the triangle inequality}
\label{sec_3}

Recall that a map $\nu: \RR^{n} \to X$ is net-rounding if for any $x \in \RR^{n}$ we have $|x - \nu(x)| < \delta$.
Let us fix some net-rounding map $\nu$, whose existence is assured by the fact that $X$ is a $\delta$-net in $\RR^{n}$, and again use the notation $\tilde{x} = \nu(x)$ for a point $x \in \RR^{n}$.
From now on and until the end of this section we also fix $v \in S^{n-1}$. 
For $\xi \in \RR^{n}$ and $t \in \RR$ define
$$
v(\xi,t) = \xi + tv \in \RR^{n}.
$$
We think of $v(\xi,t)$ as shifting the point $\xi$ by time $t$ in direction $v$. According to the above-mentioned notation, we denote the net-rounding of such a point by $\tilde{v}(\xi, t) = \nu(v(\xi ,t))$.
For $x \in M$, $\xi \in \RR^{n}$ and $t > 0$ define
$$
\delta(x,\xi,t) = 
d(x, \tilde{v}(\xi, t)) +
d(x, \tilde{v}(\xi, -t)) - 
d(\tilde{v}(\xi, t), \tilde{v}(\xi, -t)).
$$
The quantity $\delta(x,\xi, t)$ is the deficit in the triangle inequality with respect to the geodesic triangle in $M$ whose vertices are $\tilde{v}(\xi, t)$, $\tilde{v}(\xi, -t)$ and $x$.
Recall that for $x \in M$ we defined $f_{v}(x) = B^{v}(x) - B_{v}(x)$  which is non-negative. The function $f_{v}$ is in fact the limit of the deficit in the triangle inequality, as expressed in the following lemma.

\begin{lemma}
	For any $x \in M$ and $\xi \in \RR^{n}$	we have $\delta(x, \xi, t) \longrightarrow f_{v}(x)$ as $t \to \infty$.
	\label{lem_1930}
\end{lemma}

\begin{proof}
	Let $x \in M$ and $\xi \in \RR^{n}$. For $t > 0$ abbreviate  $q^{+}_{t} = \tilde{v}(\xi,t)$ and $q^{-}_{t} = \tilde{v}(\xi, -t)$.
Observe that $q^{+}_{t} \rightsquigarrow v$ narrowly and $q^{-}_{t} \rightsquigarrow -v$ narrowly as $t \to \infty$.
Hence we have that $d_{q^{+}_{t}}(x) \longrightarrow B_{v}(x)$ and $d_{q^{-}_{t}}(x) \longrightarrow B_{-v}(x)$ as $t \to \infty$.
Moreover, the deficit in the triangle inequality for the Euclidean triangle with vertices $q^{+}_{t}$, $q^{-}_{t}$ and the origin satisfies
$$
|q^{+}_{t}| + |q^{-}_{t}| - |q^{+}_{t} - q^{-}_{t}| \xrightarrow{t \to \infty} 0.
$$
Therefore we obtain that
	\begin{align*}
		\delta(x,\xi,t) &= d(x, q^{+}_{t}) + d(x, q^{-}_{t}) - d(q^{+}_{t}, q^{-}_{t}) 
		\\ &= 
		|q^{+}_{t}|- d_{q^{+}_{t}}(x) + |q^{-}_{t}|- d_{q^{-}_{t}}(x) - |q^{+}_{t} - q^{-}_{t}| \xrightarrow{t \to \infty} - B_{v}(x) - B_{-v}(x) = f_{v}(x),
	\end{align*}
	and the proof is completed.
\end{proof}

The following lemma is a rather simple implication of the triangle inequality that we will use later on in our construction.

\begin{lemma}
	Let $x \in M$, $\xi \in \RR^{n}$ and $t > 0$. Then for any $y$ lying on the geodesic segment connecting $x$ and $\tilde{v}(\xi,t)$ we have $\delta(y, \xi, t) \leq \delta(x, \xi, t)$.
	\label{lem_1522}
\end{lemma}

\begin{figure}
	\begin{center} \includegraphics[width=4.5in]{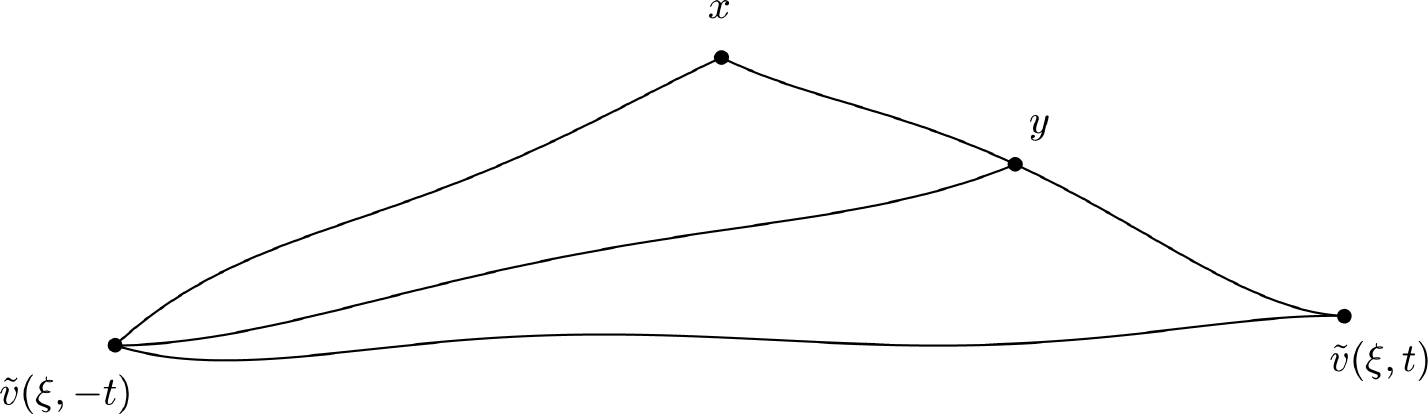} \end{center}
	\caption{An illustration of the geodesic triangles from Lemma \ref{lem_1522}.}
\end{figure}

\begin{proof}
	Since $y$ lies on the geodesic segment connecting $x$ and $\tilde{v}(\xi,t)$ we have that
	$$
	d(x, \tilde{v}(\xi, t)) = d(x,y) + d(y, \tilde{v}(\xi,t)).
	$$
	Therefore by the triangle inequality we obtain that
	\begin{align*}
		\delta(y, \xi, t) &= 
		d(y, \tilde{v}(\xi, -t)) +
		d(y, \tilde{v}(\xi,t)) - 
		d(\tilde{v}(\xi, t), \tilde{v}(\xi, -t))
		\\ &\leq 
		d(y,x) + d(x, \tilde{v}(\xi, -t)) +
		d(y, \tilde{v}(\xi,t)) - 
		d(\tilde{v}(\xi, t), \tilde{v}(\xi, -t)) = \delta(x, \xi, t),
	\end{align*}
	completing the proof.
\end{proof}

Let $\xi_{0} \in \RR^{n}$ and suppose that $\xi^{+}, \xi^{-} \in \RR^{n}$ are such that $\max \{ |\xi^{+} - \xi_{0}|, |\xi^{-} - \xi_{0}|\} \leq \sqrt{n}$. For $t,s > 0$ consider the Euclidean triangle in $\RR^{n}$ whose vertices are $\tilde{v}(\xi^{+},t)$,  $\tilde{v}(\xi^{-},-s)$ and $\tilde{\xi}_{0}$. As $t,s \rightarrow \infty$, the deficit in the triangle inequality tends to zero. This assertion clealy remains valid if all three points are shifted in direction $v$ by some fixed time $t_{0} \in \RR$.
Moreover, the convergence of this deficit in the triangle inequality is uniform in $\xi_{0}$ and in $t_{0}$.
In other words, for any $\eps > 0$ there exists $T = T(n, \delta, \eps) > 0$ large enough so that for any $\xi^{+},\xi^{-} \in \overline{B}_{\RR^{n}}(\xi_{0}, \sqrt{n})$, $t_{0} \in \RR$ and $t,s \geq T$ we have
\begin{equation}
	|\tilde{v}(\xi_{0}, t_{0}) - \tilde{v}(\xi^{+}, t_{0} + t)| + 
	|\tilde{v}(\xi_{0}, t_{0}) - \tilde{v}(\xi^{-}, t_{0} - s)| -
	|\tilde{v}(\xi^{+}, t_{0} + t) - \tilde{v}(\xi^{-}, t_{0} - s)| < \eps.
	\label{eq_2136}
\end{equation}
Since we fixed the dimension of the manifold $n \geq 2$ and the net constant $\delta > 0$, in the following we consider $T$ only as a function of $\eps > 0$.
Loosely speaking, the following lemma states that shifting vertices in opposite directions by some time larger than $T$ may only cause a small increase of at most $2\eps$ in the triangle inequality deficit.
\begin{lemma}
	Let $\eps > 0$ and $T = T(\eps) > 0$ be so that (\ref{eq_2136}) holds true. 
	Let $x \in M$, $t_{0} \geq T$ and suppose that $u_{0},u_{1} \in \RR^{n}$ are such that
	$ |u_{0} - u_{1}| \leq \sqrt{n} $.
	Then $\delta(x,u_{0},t) < \delta(x, u_{1}, t_{0}) + 2\eps$
	for all $t \geq t_{0} + T$.
	\label{lem_1458}
\end{lemma}

\begin{proof}
	Since $|u_{0}-u_{1}| \leq \sqrt{n}$ and $t_{0} \geq T$, for any $t \geq t_{0} + T$ we may apply (\ref{eq_2136}) to the triangle with vertices $\tilde{v}(u_{1},- t_{0})$, $\tilde{v}(u_{1},t_{0})$ and $\tilde{v}(u_{0},t)$, so that 
	$$
	d(\tilde{v}(u_{1}, t_{0}), \tilde{v}(u_{1}, -t_{0})) +
	d(\tilde{v}(u_{1}, t_{0}), \tilde{v}(u_{0}, t)) - 
	d(\tilde{v}(u_{0}, t), \tilde{v}(u_{1}, -t_{0})) < \eps.
	$$
	Similarly we may use (\ref{eq_2136}) for the triangle with vertices $\tilde{v}(u_{0},-t)$, $\tilde{v}(u_{1}, -t_{0})$ and $\tilde{v}(u_{0},t)$, so that
	$$
	d(\tilde{v}(u_{1}, -t_{0}), \tilde{v}(u_{0}, t)) + 
	d(\tilde{v}(u_{1}, -t_{0}), \tilde{v}(u_{0}, -t)) - 
	d(\tilde{v}(u_{0}, t), \tilde{v}(u_{0}, -t)) < \eps.
	$$
	Summing these two inequalities, for any $t \geq t_{0} + T$ we have that
	\begin{align}
		d(\tilde{v}(u_{1}, t_{0}), \tilde{v}(u_{1}, -t_{0})) &+
		d(\tilde{v}(u_{1}, t_{0}), \tilde{v}(u_{0}, t)) 
		\nonumber \\ &+
		d(\tilde{v}(u_{1}, -t_{0}), \tilde{v}(u_{0}, -t)) < 
		d(\tilde{v}(u_{0}, t), \tilde{v}(u_{0}, -t)) + 2\eps.
		\label{eq_1910}
	\end{align}
	Using (\ref{eq_1910}) together with the triangle inequality we obtain that
	\begin{align*}
		\delta(x,u_{0},t) &= 
		d(x, \tilde{v}(u_{0}, -t)) + 
		d(x, \tilde{v}(u_{0}, t)) - 
		d(\tilde{v}(u_{0}, t), \tilde{v}(u_{0}, - t)) 
		\\ & <
		d(x, \tilde{v}(u_{0}, -t)) + 
		d(x, \tilde{v}(u_{0}, t)) 
		\\ & - d(\tilde{v}(u_{0}, -t), \tilde{v}(u_{1}, -t_{0})) -
		d(\tilde{v}(u_{1}, t_{0}), \tilde{v}(u_{0}, t)) -
		d(\tilde{v}(u_{1}, -t_{0}), \tilde{v}(u_{1}, t_{0})) + 2 \eps
		\\ & \leq
		d(x, \tilde{v}(u_{1},-t_{0})) +
		d(x, \tilde{v}(u_{1},t_{0})) - 
		d(\tilde{v}(u_{1}, -t_{0}), \tilde{v}(u_{1}, t_{0})) + 2\eps 
		\\ &= 
		\delta(x,u_{1},t_{0}) + 2\eps,
	\end{align*}
	and the proof is completed.
\end{proof}

\begin{figure}
	\begin{center} \includegraphics[width=6in]{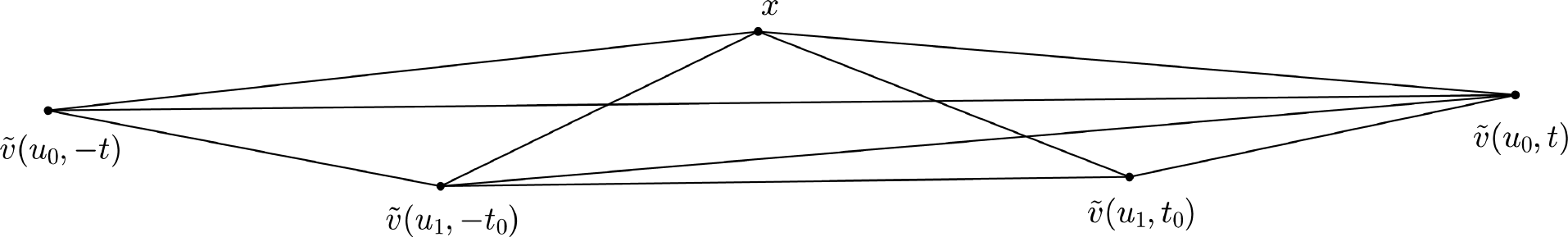} \end{center}
	\caption{An illustration of the triangles involved in the proof of Lemma \ref{lem_1458}. Note that the edges  represent geodesic segments in $M$ that do not necessarily intersect.
	} 
\end{figure}

\begin{corollary}
	Let $\eps > 0$ and $T = T(\eps) > 0$ be so that (\ref{eq_2136}) holds true.
	Then for any $x \in M$, $u_{0} \in \RR^{n}$ and $t_{0} \geq T$ we have that $f_{v}(x) \leq \delta(x,u_{0},t_{0}) +2 \eps$.
	\label{cor_1933}
\end{corollary}

\begin{proof}
	Using Lemma \ref{lem_1458} we have that $\delta(x,u_{0},t) < \delta(x,u_{0},t_{0}) +2 \eps$ for all $t \geq t_{0} + T$. Therefore according to Lemma \ref{lem_1930} we obtain that $f_{v}(x) = \lim_{t \to \infty} \delta(x,u_{0},t) \leq \delta(x,u_{0},t_{0}) + 2\eps$.
\end{proof}

\section{No conjugate points}
\label{sec_ncp}

For the topological part of the proof we again construct a continuous map $\Phi$ from $\RR^{n}$ onto the manifold $M$.
The construction is quite different in essence from the one described in Section \ref{section_phi} but relies on a similar framework.
Let $L = \ZZ^{n}$, and for a point $p \in L$ write $Q(p) = p + [0,1]^{n} \subseteq \RR^{n}$ for the cube of side-length $1$, and $\text{ver}(Q(p)) \subseteq L$ for its set of vertices.

\begin{proposition}
	For any $v \in S^{n-1}$ and $\eps > 0$ there exists a continuous map $\Phi : \RR^{n} \to M$ for which the following hold:
	\begin{enumerate}
		\item[(a)] There exists a constant $C \geq 0$ depending on $n$, $\delta$ and $\eps$ such that 
		$$
		d(\Phi(x), \tilde{p}) \leq C
		\qquad \text{for any } p \in L \text{ and } x \in Q(p).
		$$
		\item[(b)] For any $x \in \RR^{n}$ we have that $f_{v}(\Phi(x)) \leq (1+2n) \cdot \eps$.
	\end{enumerate}
	\label{prop_1930}
\end{proposition}

\begin{proof}
Fix $v \in S^{n-1}$, $\eps > 0$ and let $T = T(\eps) > 0$ be so that (\ref{eq_2136}) holds true.  
We say that the affinely independent points $\{p_{1},...,p_{k}\}$ are \textit{vertices of a unit cube} if they belong to $\text{ver}(Q(p))$ for some $p \in L$.
A $k$-simplex in the triangulation of $Q(p) \subseteq \RR^{n}$ is the convex hull of $k+1$ affinely independent vertices of a unit cube.   
We construct a ``simplex mapping'' $\Delta$ that for any $0 \leq k \leq n$ and $k+1$ affinely independent vertices of a unit cube $u_{0},u_{1},...,u_{k} \in L$ assigns a continuous map
$$
\Delta[u_{0},u_{1},...,u_{k}] : \text{conv}(u_{0},u_{1},...,u_{k}) \to M,
$$
	such that the following hold:
	\begin{enumerate}
		\item[(i)] For $k = 0$ we have $\Delta[u_{0}] = \nu$, i.e. $\Delta[u_{0}](u_{0}) = \tilde{u}_{0}$.
		\item[(ii)] The mapping $\Delta$ is consistent with the simplicial complex structure, i.e. for any $1 \leq k \leq n$ and affinely independent vertices of a unit cube $u_{0},u_{1},...,u_{k} \in L$ we have that
		$$
		\Delta[u_{0},u_{1},...,u_{k}] \big|_{\text{conv}(J(u_{0},u_{1},...,u_{k}))} = \Delta[J(u_{0},u_{1},...,u_{k})]
		\qquad
		\text{for any } J \subseteq \{0,1,...,k\}.
		$$
		\item[(iii)] For any $1 \leq k \leq n$ and affinely independent vertices of a unit cube $u_{0},u_{1},...,u_{k} \in L$,
		$$
		\delta(y, u_{0}, k \cdot T) < \left( 1 + 2(k-1)\right) \cdot \eps
		\qquad \text{for any } y \in \Im(\Delta[u_{0},u_{1},...,u_{k}]).
		$$ 
		\item[(iv)] For any $1 \leq k \leq n$ and affinely independent vertices of a unit cube $u_{0},u_{1},...,u_{k} \in L$,
		$$
		d(y, \tilde{v}(u_{0}, k \cdot T)) \leq 
		k \cdot \left( \sqrt{n} + T + 2\delta\right)
		\qquad \text{for any } y \in \Im(\Delta[u_{0},u_{1},...,u_{k}]).
		$$
	\end{enumerate}
	
	The definition of $\Delta$ is done by induction on $k$. For a single point $u_{0} \in L$ we define $\Delta[u_{0}]$ as required by (i). 
	For $k = 1$, let $u_{0}, u_{1} \in L$ be two distinct vertices of a unit cube. 
	Write $\Delta = \Delta[u_{0},u_{1}] : \text{conv}(u_{0},u_{1}) \to M$ for the map to be defined. For $u_{0}$ and $u_{1}$ we define $\Delta$ to be $\nu$, so that 
	$$
	\Delta(u_{j}) = \Delta[u_{j}](u_{j}) = \tilde{u}_{j}
	\qquad \text{for } j \in \{0,1\}.
	$$ 
	In particular we obtain that (ii) holds true.
	Let $\xi $ be the midpoint of the line segment $\text{conv}(u_{0},u_{1})$, and map it by $\Delta(\xi) = \tilde{v}(u_{0},T) \in X$.
	Define the map $\Delta$ on each line segment in $\RR^{n}$ connecting $\xi$ and $u_{j}$ to be the minimizing geodesic segment $\eta_{j}$ connecting $\Delta(\xi) = \tilde{v}(u_{0},T)$ to $\Delta(u_{j}) = \tilde{u}_{j}$. 
	The map $\Delta$ is clearly continuous. 
	Since $u_{0}$ and $u_{1}$ are vertices of a unit cube we have that $|u_{1} - u_{0}| \leq \sqrt{n}$. Hence according to Lemma \ref{lem_1522} together with (\ref{eq_2136}), for any $j \in \{0,1\}$ and $y \in \eta_{j}$ we have 
	$$
	\delta(y,u_{0},T) \leq \delta(\tilde{u}_{j}, u_{0}, T) < \eps,
	$$
	so that (iii) holds true. 
	Moreover, using (\ref{eq_1651}) together with the triangle inequality we have that 
	$$
	d(\tilde{u}_{0}, \tilde{v}(u_{0},T)) < T + 2\delta 
	\qquad \text{and} \qquad
	d(\tilde{u}_{1}, \tilde{v}(u_{0}, T)) < \sqrt{n} + T + 2\delta.
	$$
	Since $\eta_{j}$ is defined to be the minimizing geodesic connecting $\tilde{v}(u_{0},T)$ and $\tilde{u}_{j}$, we obtain that (iv) holds true as well.
	
	\medskip
	The construction for $k \geq 2$ is done in a similar way. Let $u_{0},u_{1},...,u_{k} \in L$ be affinely independent vertices of a unit cube.
	By the induction hypothesis we have a continuous map 
	$$
	\partial \Delta : \partial\; \text{conv}(u_{0},u_{1},...,u_{k}) \to M,
	$$
	obtained by gluing all of the maps $\{\Delta[u_{0},...,\hat{u}_{j},...,u_{k}] : 0 \leq j \leq k\}$, where $\hat{u}_{j}$ stands for the omission of the element with index $j$ from the expression.
	The gluing is valid thanks to property (ii) of the induction hypothesis.
	Abbreviate $\sigma = \text{conv}(u_{0},...,u_{k})$, $t_{0} = (k-1) \cdot T$ and $\Delta = \Delta[u_{0},...,u_{k}]$ for the map to be defined.
	On the boundary of the simplex we define $\Delta |_{\partial \sigma} = \partial \Delta$ so that (ii) holds true.
	In order to extend the map $\Delta$ to the interior of the simplex $\sigma$, we first define it on the centroid $\xi \in \text{int} (\sigma)$ by $\Delta(\xi) = \tilde{v}(u_{0}, t_{0} + T) \in X$.
	Any line in $\sigma$ connecting $\xi$ to a point $\zeta \in \partial \sigma$ is then mapped to the geodesic segment connecting $\Delta(\xi) = \tilde{v}(u_{0}, t_{0} + T)$ and $\Delta(\zeta)$.
	Since $\partial \Delta$ is continuous and the exponential map is a diffeomorphism at points of $X$ by \cite[Corollary 4.1]{EK}, the map $\Delta$ is continuous.
	
	\medskip
	Let $\zeta \in \partial \sigma$, write $z = \Delta(\zeta)$, and let $\eta$ be the minimizing geodesic connecting $\tilde{v}(u_{0}, t_{0} + T)$ and $z$.
	Using the induction hypothesis, it follows from (iii) that $\delta(z, u_{j}, t_{0}) < \left( 1 + 2(k-2) \right) \cdot \eps$ for either $j = 0$ or $j = 1$.
	Since $|u_{0} - u_{1}| \leq \sqrt{n}$, as they are both vertices of a unit cube, using Lemma \ref{lem_1522} and Lemma \ref{lem_1458} we have that for $j \in \{0,1\}$ and any $y \in \eta$,
	$$
	\delta(y, u_{0}, t_{0} + T) \leq \delta(z, u_{0}, t_{0} + T) < \delta(z, u_{j}, t_{0}) + 2\eps < \left( 1 + 2(k-1)\right) \cdot \eps.
	$$
	Thus we see that (iii) holds true.
	Moreover, using the induction hypothesis, it follows from (iv) that $d(z, \tilde{v}(u_{j}, t_{0})) \leq (k-1) \cdot \left( \sqrt{n} + T + 2\delta \right)$. Therefore by (\ref{eq_1651}) together with the fact that $|u_{0}-u_{1}| \leq \sqrt{n}$ and $y$ lies on the geodesic segment connecting $z$ and $\tilde{v}(u_{0}, t_{0} + T)$, we obtain that
	\begin{align*}
		d(y, \tilde{v}(u_{0}, k \cdot T)) &= 
		d(y, \tilde{v}(u_{0}, t_{0} + T)) \leq 
		d(z, \tilde{v}(u_{0}, t_{0} + T)) 
		\\ &\leq 
		d(z, \tilde{v}(u_{j}, t_{0})) + d(\tilde{v}(u_{j}, t_{0}), \tilde{v}(u_{0}, t_{0} + T)) \leq  k \cdot \left( \sqrt{n} + T + 2\delta \right).
	\end{align*}
	Hence (iv) holds true as well, and the construction of $\Delta$ is completed.
	
\medskip
In order to obtain the map $\Phi : \RR^{n} \to M$ described in the proposition, we glue together $\Delta$-maps as we did in Section \ref{subsec_gluing}.
This is again possible thanks to property (ii).
Moreover, using the simplex notation described in (\ref{eq_1642}), according to property (iv) together with (\ref{eq_1651}) and the triangle inequality, for any $p \in L$ and $\pi \in S_{n}$ we have
	\begin{align*}
		\sup \{ d (\tilde{p}, y) : 
		y \in \Im(\Delta[\sigma(p,\pi)])\} &\leq 
		\sup \{ d (y, \tilde{v}(p, n \cdot T)) : 
		y \in \Im(\Delta[\sigma(p,\pi)])\} 
		\\ &+ d(\tilde{p}, \tilde{v}(p, n \cdot T)) 
		\leq n \cdot \left( \sqrt{n} + T + 2\delta \right) + n \cdot T + 2\delta.
	\end{align*}
Thus by gluing together all the maps $\{\Delta[\sigma(p, \pi)] : p \in L, \; \pi \in S_{n}\}$ we obtain a continuous map $\Phi : \RR^{n} \to M$ for which property (a) of the proposition holds true.
Moreover, property (iii) of the $\Delta$-maps implies that for any $x \in Q(p)$ we have
$ \delta(\Phi(x), p, n \cdot T) < (1 + 2(n-1)) \cdot \eps $.
It therefore follows from Corollary \ref{cor_1933} that 
$$
f_{v}(\Phi(x)) \leq (1+2n) \cdot \eps
\qquad \text{for any } x \in \RR^{n}.
$$
Hence property (b) of the map $\Phi$ holds true as well, thus completing our construction.
	
\end{proof}

\medskip
Let us now complete the proof of Proposition \ref{prop_1530}, and consequently show that Theorem \ref{thm_1645} holds true.
Fix $v \in S^{n-1}$ and $\eps > 0$. Let $\Phi: \RR^{n} \to M$ be the continuous map constructed in Proposition \ref{prop_1930}. 
Following the argument given in Section \ref{subsec_surjective} we see that property (a) of the proposition implies that $\Phi$ is surjective. 
Therefore by property (b) of the proposition we obtain that $f_{v}(y) \leq (1 + 2n) \cdot \eps$ for any $y \in M$. 
Since $\eps$ can be taken to be arbitrarily small, we obtain that $f_{v} \equiv 0$ as required.

\section{Large-scale geometry}
\label{sec_largescale}

For $x \in M$ and $v \in S^{n-1}$ we write $\gamma_{x,v}: \RR \to M$ for the transport line of $B_{v}$ with $\gamma_{x,v}(0) = x$.
The existence of this transport line is assured by the fact that the function $B_{v}$ foliates, and $\gamma_{x,v}$ is a complete, minimizing geodesic.
We think of $\gamma_{x,v}$ as the geodesic emanating from the point $x$ in a ``global direction'' $v$.
Another natural way to associate a direction with the geodesic $\gamma_{x,v}$ at the point $x$ is the unit tangent vector $\dot{\gamma}_{x,v}(0) \in S_{x}M = \{ u \in T_{x}M \; ; \; \Vert u \Vert = 1\}$, to which we refer as the ``local direction''.
It is therefore natural to inquire about the map 
$$
S^{n-1} \ni v \mapsto \dot{\gamma}_{x,v}(0) \in S_{x}M,
$$ 
that maps a global direction to a local direction.
Recall from (\ref{eq_1740}) that $\dot{\gamma}_{x,v}(0) = \nabla B_{v}(x)$ as we defined $\gamma_{x,v}$ to be a transport line of $B_{v}$.
For the proof of Theorem \ref{thm_1645} we showed that the map $v \mapsto \nabla B_{v}(x)$ is continuous and onto, which is the result of Proposition \ref{prop_1530} and Lemma \ref{lem_1841}.
Using Theorem \ref{thm_1730} we will now show that this map is in fact a homeomorphism. 

\medskip
Let $x \in M$, $v \in S^{n-1}$ and abbreviate $\gamma = \gamma_{x,v}$. According to Theorem \ref{thm_1730}, for any $t \geq 0$ there exists $p_{t} \in X$ such that $d(p_{t},\gamma(t)) < 2 \delta n$. Using the triangle inequality we see that 
\begin{equation}
| |p_{t}| - t| = |d(p_{t},0) - d(\gamma(t), \gamma(0))| \leq 
d(p_{t}, \gamma(t)) + d(0,x) < 2\delta n + d(0,x).
\label{eq_2331}
\end{equation}
Moreover, since $\gamma$ is a transport line of the $1$-Lipschitz function $B_{v}$ we have that
\begin{align}
|p_{t}| - \langle p_{t}, v \rangle = 
|p_{t}| - B_{v}(p_{t}) &\leq 
|p_{t}| - B_{v}(\gamma(t)) + 2 \delta n 
\nonumber \\ &= 
|p_{t}| - t - B_{v}(x) + 2 \delta n < 
2 \left( d(0,x) + 2 \delta n \right).
\label{eq_2332}
\end{align}
Using (\ref{eq_2331}) we see that $|p_{t}|$ tends to infinity with $t$, and therefore it follows from (\ref{eq_2332}) that $p_{t} \rightsquigarrow v$.
Hence we see that the global direction $v$ is determined by the transport line $\gamma$.
In other words, the map $v \mapsto \nabla B_{v}(x)$ is one-to-one. As mentioned above, in previous sections we showed that this map is continuous and onto.  Therefore we conclude the following:

\begin{corollary}
For any $x \in M$, the map $v \mapsto \nabla B_{v}(x)$ from $S^{n-1}$ to $S_{x}M$ is a homeomorphism.
\label{cor_61}
\end{corollary}

\medskip
Using the approximations above we also obtain our next corollary, which states that the large-scale geometry of the manifold $M$ tends to the Euclidean one. According to (\ref{eq_2331}) and (\ref{eq_2332}) we see that if $t > C := 3 \left( d(0,x) + 2\delta n \right)$ then
$$
|p_{t} - tv|^{2} = |p_{t}|^{2} - 2t \langle p_{t},v\rangle + t^{2} < 
(t + C)^{2} - 2t \left( t - C \right) + t^{2} = 4Ct + C^{2}.
$$
In case $t \leq C$ we have $|p_{t} - tv| \leq |p_{t}| + t \leq 3C$. 
Hence there exist constants $C_{1},C_{2} > 0$ depending on $n$, $\delta$ and $d(0,x)$ such that
$$
|p_{t} - tv| \leq C_{1}  \sqrt{t} + C_{2}. 
$$
For $w \in S^{n-1}$ and $s \geq 0$ we may similarly let $q_{s} \in X$ be such that $d(q_{s}, \gamma_{x,w}(s)) < 2 \delta n$ and obtain that $|q_{s} - sw| \leq C_{1} \sqrt{s} + C_{2}$. It therefore follows from the triangle inequality that 
$$
\left| d(\gamma_{x,v}(t), \gamma_{x,w}(s)) - |tv - sw| \right| \leq 
C_{1} (\sqrt{t} +  \sqrt{s}) + 2 C_{2} + 4 \delta n.
$$
Using this approximation we see that if we fix $t,s \geq 0$ and write $a = tv$, $b = sw$, then 
$$
d(\gamma_{x,v}(tr), \gamma_{x,w}(sr)) = r \cdot |a-b| + O(\sqrt{r})
\qquad \text{as } r \to \infty.
$$
In particular we obtain the following:

\begin{corollary}
Let $x \in M$.
Then for $a, b \in \RR^{n}$, writing $a = tv, b = sw$ with $v,w \in S^{n-1}$ and $t,s \geq 0$, we have
$$
\lim_{r \to \infty} 
\frac{d(\gamma_{x,v}(tr), \gamma_{x,w}(sr))}{r} = 
|a-b|,
$$
and the convergence is locally uniform in $a,b \in \RR^{n}$.
\end{corollary}

\medskip
\noindent Department of Mathematics, Weizmann Institute of Science, Rehovot 76100, Israel. \\
{\it e-mail:} \verb"matan.eilat@weizmann.ac.il"

\end{document}